\theoremstyle{plain} 
\newtheorem{thm}{Theorem}[section]
\newtheorem{prop}[thm]{Proposition}
\newtheorem{cor}[thm]{Corollary}
\theoremstyle{definition}
\newtheorem{defn}[thm]{Definition}
\theoremstyle{remark}
\newtheorem{rem}[thm]{Remark}
\newtheorem{ex}{Example}
\numberwithin{equation}{section}
\title{Determining the optimal coefficient of the spatially periodic Fisher-KPP equation that minimizes the spreading speed}
\author{Ryo Ito
\\Meiji University, Tokyo, Japan }
\date{}
\begin{document}

\maketitle

\begin{abstract}
This paper is concerned with the spatially periodic Fisher-KPP equation $u_t=(d(x)u_x)_x+(r(x)-u)u$, $x\in \mathbb{R}$, where $d(x)$ and $r(x)$ are periodic functions with period $L>0$.
We assume that $r(x)$ has positive mean and $d(x)>0$.
It is known that there exists a positive number $c^*_d(r)$, called the minimal wave speed, such that a periodic traveling wave solution with average speed $c$ exists if and only if $c \geq c^*_d(r)$.
In the one-dimensional case, the minimal speed $c^*_d(r)$ coincides with the ``spreading speed'', that is, the asymptotic speed of the propagating front of a solution with compactly supported initial data.
In this paper, we study the minimizing problem for the minimal speed $c^*_d(r)$ by varying $r(x)$ under a certain constraint, while $d(x)$ arbitrarily.
We have been able to obtain an explicit form of the minimizing function $r(x)$.
Our result provides the first calculable example of the minimal speed for spatially periodic Fisher-KPP equations as far as the author knows.
\end{abstract}

\noindent
{\sc keywords}:{\ KPP equation; traveling wave; minimal speed; spreading speed} 

\noindent
{\sc AMS subject classifications}:{\ 35K91, 35C07, 92D40} 





\section{Introduction}
Propagation phenomena appear in various fields of natural science, including population genetics, epidemiology, ecology and so on. 
The Fisher-KPP equation is among the classical models that describe propagation phenomena.
From the viewpoint of ecology, this equation describes the expansion of the territory of invading alien species in a given habitat.

In this paper, we investigate the spatially periodic Fisher-KPP equation:
\[
\tag*{$(E)$}u_t = (d(x)u_{x})_x + (r(x)-u)u, \ \ \ x \in \mathbb{R}, t>0,
\]
where $d(x)$ and $r(x)$ are periodic functions with period $L>0$.
While we always assume $d>0$, we will allow $r(x)$ to change sign: so long as its mean $\langle r \rangle_a$ is positive (see (\ref{asump})).


The solution $u(x,t)$ represents the population density of an invading species, while $r(x)$ denotes the intrinsic growth rate and $d(x)$ is the diffusion coefficient. 
These periodic coefficients represent an environment in which favorable zones and less favorable zones appear alternately in a periodic manner.

The Fisher-KPP equation was introduced by Fisher \cite{Fisher} and Kolmogorov, Petrovsky and Piskunov \cite{KPP} in 1937 in the content of population genetics. 
In 1951, Skellam \cite{S} used this equation as a model for biological invasion in ecology.
The above works were focused on the spatially homogeneous equation.
Shigesada, Kawasaki and Teramoto \cite{S1} in 1986 considered the case where the coefficients are spatially periodic and studied the influence of periodic environments on the invasion speed.
The paper \cite{S1} introduced a notion of traveling wave solutions in the periodic setting, while they called ``traveling periodic solution".

Berestycki and Hamel \cite{BH} proved the existence of periodic traveling waves for the positive coefficient $r(x)$.
They also proved that the slowest traveling wave exists.
We call its speed the minimal traveling wave speed (or minimal speed in short) and it is denoted by $c_d^*(r)$, that is, the traveling wave with average speed $c$ exists if and only if $c \geq c^*_d(r)$. 

Weinberger \cite{W} also studied periodic traveling waves together with the ``spreading speed'' in a rather abstract setting that include reaction-diffusion equation of the form (E) as a special case.
The term ``spreading speed'' refers to the asymptotic speed of the propagating front of a solution with compactly supported initial data.
Under the assumption that $u \equiv 0$ is unstable the existence of the spreading speed and periodic traveling waves was proved.
He also derived that the spreading speed coincides with the minimal speed in the one-dimensional case and the spreading speed is characterized by using the principal eigenvalue of the corresponding linearized operator.


Berestycki-Hamel-Nadirashvili \cite{BHN1} proved that the minimal speed $c_d^*(r)$ is characterized by the following formula:
\[
c^*_d(r)=\min_{\lambda>0} \Big( -\cfrac{k_{\lambda}(d,r)}{\lambda}\Big),
\]
where $k_{\lambda}(d,r)$ is the principal eigenvalue of a certain operator $-\mathcal{L}_{\lambda,d,r}$.
The variational characterization of the principal eigenvalue $k_{\lambda}(d,r)$ has been derived by Nadin \cite{N1}. 
Hence we can analyze the minimal speed by using variational method.
See also subsection \ref{problem}.

The purpose of this work is to analyze the influence of periodic environment on the invasion speed.
Specifically, in this paper, we consider the problem of finding a minimizing function of $c_d^*(r)$ varying $r(x)$, where $d \in C^{1+\delta}_{\mathrm{per}}(\mathbb{R})$ is fixed and a minimizer $r(x)$ is sought in
\[
\varLambda(\alpha) := \{ \ r \in C^{\delta}_{\mathrm{per}}(\mathbb{R}) \mid \frac{1}{L} \int_{0}^{L} r(x) dx = \alpha \  \}.
\]
Here $\delta>0$ is given positive constant.
In other words, we consider the following minimizing problem.
\[
\tag*{$(P)_d$} \underset{r \in \varLambda(\alpha)}{\mathrm{Minimize}} \  c^*_d(r)
\]
From the ecological point of view, the spreading speed describes the invasion speed of alien species.
Hence the problem means seeking the best disposition of environment to prevent the invasion of alien species.

A minimizing problem associated with the minimal speed is partially discussed in Shigesada-Kawasaki-Teramoto \cite{S1}.
They studied the dependence of the period $L>0$ to $c^*_{d}(r)$ under the certain  assumption, and they proved that $L \mapsto c^*_{d}(r)$ is nondecreasing.
Their work was partly unrigorous from mathematical point of view because their analysis was based on a formal asymptotic representation of traveling waves. 
Nadin \cite{N1} gave the rigorous proof of this research by dealing with much more general equations.

In the case where $d(x)$ is a constant, 
Berestycki-Hamel-Roques \cite{BHR2} derived that a constant function minimizes the minimal speed, and Liang-Lin-Matano \cite{M1} proved that the principal eigenfunction is a constant function if $r(x)$ is constant.
These results are derived by using the eigenvalue problem associated with the operator $-\mathcal{L}_{\lambda,d,r}$.

These previous works suggest that the minimal speed will be slower if  environments are more homogenized, 
and the most averaged environment minimizes the spreading speed.

In the case of sinusoidal diffusion and growth coefficient, N. Kinezaki, K. Kawasaki and N. Shigesada \cite{S3} computed the minimal speed varying the phase of the diffusion coefficient.
By numerically solving the equation, they concluded that the minimal speed attains its minimum (maximum) when the diffusion and the growth coefficient have same (opposite) phases.
Nadin \cite{N1} formulated this numerical result about maximizing the speed as a conjecture using the Schwarz rearrangement, and he studied the influence of the concentrating effect on the minimal speed when the diffusion coefficient is not constant.
A maximizing problem is also investigated by some researchers.
See also \cite{N1,M1,M2,Xiao,I}.
The effect of temporal averaging on the minimal speed is also considered by Nadin \cite{N2}.

In this paper, we consider the case where $d(x)$ is a periodic function.
The main difficulty with this problem is that the eigenvalue problem is more complicated than the constant case.
As we will see later, in the periodic case, the principal eigenfunction is not a constant function.
See subsection \ref{Main}.
The mathematical motivation of this work is to analyze how the optimal growth coefficient depends on the fixed diffusion coefficient.

In 2010, Nadin derived the following inequality
\begin{equation}\label{ine}
c^*_d(r) \geq 2\sqrt{\langle\, d\, \rangle_h\langle\, r\, \rangle_a}
\end{equation}
Here $\langle\, r\, \rangle_a$ is the spatial arithmetic mean of $r(x)$, and $\langle\, d\, \rangle_h$ is the spatial harmonic mean of $d(x)$, that is, the symbols $\langle\, r\, \rangle_a,\, \langle\, d\, \rangle_h$ are defined by
\begin{equation}\label{mean}
\langle\, r\, \rangle_a = \cfrac{1}{L} \int_{0}^{L} r(x) \ dx,\ \ \langle\, d\, \rangle_h =\cfrac{1}{\displaystyle{\cfrac{1}{L}\int_{0}^{L} \cfrac{1}{d(x)}\  dx}}
\end{equation}
for any $r \in C_{\mathrm{per}}(\mathbb{R})$ and $d \in C_{\mathrm{per}}^1(\mathbb{R})$.
We solve the minimizing problem $(P)_d$ by finding out a condition under which equality holds in the inequality (\ref{ine}).

We will see equality in (\ref{ine}) holds if and only if $d(x)$ and $r(x)$ satisfy the following relational expression.
\begin{equation}\label{condition}
\cfrac{r}{\langle\, r\, \rangle_a} + \cfrac{\langle\, d\, \rangle_h}{d} = 2
\end{equation}

By the condition (\ref{condition}), we see that the minimizing problem $(P)_d$ has the solution for any $d \in C^{1+\delta}_{\mathrm{per}}(\mathbb{R})$ with $\inf d >0$, that is,
\begin{equation}\label{minimizer}
r_d(x) = \alpha \Big(2-\cfrac{\langle\, d\, \rangle_h}{d(x)}\Big),\ \ \ \ x \in \mathbb{R}
\end{equation}
is the solution for the minimizing problem $(P)_d$.
The condition (\ref{condition}) is first introduced by El Smaily-Hamel-Roques \cite{condition} in a study on an approximate value of the spreading speed. 
See also subsection \ref{problem}.
In this paper, we will rediscover this condition to find the optimal coefficient.

By (\ref{minimizer}), the spreading speed attains its minimum when $r(x)$ is large in the area where $d(x)$ is large and $r_d(x)$ is small in the area where $d(x)$ is small.
See also subsection \ref{Main} and subsection 3.1.

The interpretation of our main result from the ecological point of view is that the invasion speed of alien species reaches its minimum when the species quickly disperse in their favorable areas and slowly disperse in their less favorable areas.

Our result provides the influence of a non-trivial relation between the shape of the diffusion coefficient and the growth coefficient on the spreading speed.
In some sense, our result formulated a numerical result computed by Kinezaki-Kawasaki-Shigesada \cite{S3} in a different way from Nadin.

By the effect of the surrounding environment, the most averaged function is not the minimizing function.
As far as the minimizing problem associated with the spreading speed (or the minimal traveling wave speed), this work provides the first example finding out the influence of the shape of the heterogeneity of the diffusion coefficient on the optimal growth coefficient.

Our result means that $c_d^*(r)=2\sqrt{\langle\, d\, \rangle_h\langle\, r\, \rangle_a}$ when $(d,r)$ satisfy the condition $(\ref{condition})$.
An approximate value of the spreading speed is known when $L \to 0$ and $L \to \infty$, but the exact value is only known in the case where $d$ and $r$ are constant as far as the author knows.
This is the first calculable example of the spreading speed (or the minimal  traveling wave speed) for the spatially periodic Fisher-KPP equation.

This paper is organized as follows:
In section 2, we state our main results, and we introduce known results of the spreading speed.
In section 3, we give the proofs of our main results.

	
	
	
	
	
	
	

\section{Main results}
In this section, we explain the minimal speed of traveling waves and state the main results as well as the explanation of related works.
\subsection{Formulation of the problem}\label{problem}
In this subsection, we recall some known results of the spatially periodic Fisher-KPP equation.
We consider the following Cauchy problem:
\[
\tag*{$(E_0)$}
\begin{cases}
u_t = (d(x)u_{x})_x + (r(x)-u)u, \ \ \ x \in \mathbb{R}, t>0,\\
u(x,0) = u_0(x) \geq 0, \ \ \ x \in \mathbb{R},
\end{cases}
\]
where $u_0 \in C_c(\mathbb{R}),\,u_0 \geq 0,\, u_0 \not \equiv 0$.
In what follows, we assume that
\begin{equation}\label{asump}
\inf d >0,\, \langle \, r\, \rangle_a >0.
\end{equation}
In this case, a stationary problem of $(E)$ has the positive periodic solution $p(x)$, that is, there exists the positive function $p(x)$ that satisfies
\[
(d(x)p_{x})_x + (r(x)-p)p=0.
\]
See Berestycki-Hamel-Roques \cite{BHR1}.

Weinberger \cite{W} and Berestycki-Hamel-Roques \cite{BHR1,BHR2} proved that (E) has traveling wave solutions.
\begin{defn}[Periodic traveling waves]
	A solution $u(x,t):\mathbb{R} \times \mathbb{R} \to \mathbb{R}$ of $(E)$ is called a periodic traveling wave solution in the positive direction if the following conditions hold:
	\begin{itemize}
		\item[\rm(1)]  $\underset{x \to  -\infty}{\lim}(u(x,t)-p(x)) = 0$, $\underset{x \to \infty}{\lim}u(x,t) = 0$ locally uniformly in $t \in \mathbb{R}$;
		\item[\rm(2)] There exists a constant $T>0$ such that
		\[
		u(x-L,t) = u(x,t+T) \ \ \ (x,t) \in \mathbb{R} \times \mathbb{R}.
		\]
	\end{itemize}
\end{defn}
Here we call the quantity $c := L/T$ the average speed of the traveling wave $u(x,t)$ (or ``speed" for simplicity).

They also proved that there exists the minimal traveling wave speed $c^*_d(r)$ (or ``minimal speed'' for simplicity), that is, the traveling wave with speed $c$ exists if and only if $c \geq c^*_d(r)$. 


The Cauchy problem $(E_0)$ has the classical global solution $u(x,t)$ for any $u_0 \in C_c(\mathbb{R}),\,u_0 \geq 0,\, u_0 \not \equiv 0$.
Furthermore, trivial solution $0$ is unstable under the assumption (\ref{asump}).
It means that the solution $u(x,t)$ goes to the positive function $p(x)$ as $t \to \infty$.
The speed of an expanding front of $u(x,t)$ asymptotically approaches to a certain value as $t \to \infty$.
\begin{defn}[Spreading speed]
	A quantity $\tilde{c}>0$ is called the spreading speed if for any nonnegative initial data $u_0 \in C_c(\mathbb{R})$ with $u_0 \geq 0,\, u_0 \not \equiv 0$, the solution $u(x,t)$ of the Cauchy problem with initial data $u_0$ satisfies that
	\begin{itemize}
		\item[\rm(1)] $\underset{t \to \infty}{\lim} u(ct,t) = 0$ if $c > \tilde{c}$,
		\item[\rm(2)] $\underset{t \to \infty}{\liminf} u(ct,t) > 0$ if $0 < c < \tilde{c}$.
	\end{itemize}
\end{defn}

Weinberger \cite{W} and Berestycki-Hamel-Nadirashvili \cite{BHN1} proved that the minimal speed $c^*_d(r)$ is the spreading speed in the one-dimensional case.


Any traveling wave solution $u(x,t)$ with speed $c>c^*_d(r)$ in the negative direction has the following asymptotic expression if $(x,t)$ satisfies $u(x,t) \approx 0$:
\begin{equation}\label{eq2.5.9}
u(x,t) \sim e^{\lambda (x+ct)} \psi(x),
\end{equation}
where $\psi>0$ is some $L$-periodic function and $\lambda>0$ is some constant.

In $u(x,t) \approx 0$, $(r(x)-u)$ is practically equal to the intrinsic growth rate $r(x)$.
Substituting (\ref{eq2.5.9}) into the equation $(E)$, we have
\begin{equation*}
-(d(x) \psi'(x))' - 2\lambda d(x) \psi'(x) -(\lambda^2 d(x) + \lambda d'(x) +r(x)) \psi(x) = - \lambda c \psi(x), \ \ \ x \in \mathbb{R}
\end{equation*}

Set the operator $-\mathcal{L}_{\lambda,d,r}$ on $C^2_{\mathrm{per}}(\mathbb{R})$ for any constant $\lambda>0$ as follows
\begin{equation*}
-\mathcal{L}_{\lambda,d,r} \psi(x) := - (d(x) \psi'(x))' - 2\lambda d(x) \psi'(x) -(\lambda^2 d(x) + \lambda d'(x) +r(x)) \psi(x).
\end{equation*}
and we denote by $k_{\lambda}(d,r)$ the principal eigenvalue of the operator $-\mathcal{L}_{\lambda,d,r}$, that is, 
\begin{equation}\label{eproblem}
\begin{cases}
-\mathcal{L}_{\lambda,d,r} \psi = k_{\lambda}(d,r)\psi,\\
\psi(x+L)\equiv \psi(x),
\end{cases}
\end{equation}
and the eigenfunction $\psi$ is positive.

It is expected by the above formal calculation that $-c\lambda$ is the principal eigenvalue of the operator $-\mathcal{L}_{\lambda,d,r}$, that is, 
\[
-c\lambda = k_{\lambda}(d,r).
\]
Recall that $c^*_d(r)$ is the minimal speed, we expect
\begin{equation}\label{ck}
c^*_d(r) = \min_{\lambda>0}\Big( - \cfrac{k_{\lambda}(d,r)}{\lambda}\Big),
\end{equation}
and this formula was established by Berestycki-Hamel-Nadirashvili \cite{BHN1}.

In the case where $d(x)$ is a constant, Liang-Lin-Matano \cite{M1} proved that equality in the inequality (\ref{ine}) holds if and only if $r(x)$ is a constant.
They also derived that the principal eigenfunction of the operator $-\mathcal{L}_{\lambda_0,d,r}$ is constant, where $\lambda_0 = \sqrt{\langle\, r\, \rangle_a/d}$ satisfies
\[
c^*_d(r) = - \cfrac{k_{\lambda_0}(d,r)}{\lambda_0}. 
\]
These results are proved by using the eigenvalue problem (\ref{eproblem}).

However, if $d(x)$ is not constant, the eigenvalue problem (\ref{eproblem}) is more complicated than the constant case.
As we will see later (Theorem \ref{main4}), the principal eigenfunction is not constant.
That is why we use the formula about the principal eigenvalue derived by Nadin which is simpler than (\ref{eproblem}).

The first eigenvalue of $-\varDelta$ is represented by a integral functional (the Rayleigh characterization).
Nadin \cite{N1} gives the following representation of $k_{\lambda}(d,r)$ that is the principal eigenvalue of the non-symmetric operator $-\mathcal{L}_{\lambda,d,r}$.
\begin{prop}[Nadin \cite{N1}]\label{tr3}
	Set $E_L := \{ \ \varphi \in C^1_{\mathrm{per}} \mid \varphi > 0, \ \int_{0}^{L} \varphi^2 dx =1 \  \}$.
	The principal eigenvalue $k_{\lambda}(d,r)$ of $-\mathcal{L}_{\lambda,d,r}$ is characterized as follows:
	\[
	k_{\lambda}(d,r) = \min_{\varphi \in E_L} \Biggr\{ \int_{0}^{L}d|\varphi '|^2 \ dx -\int_{0}^{L} r \varphi^2 \ dx - \cfrac{\lambda^2L^2}{\displaystyle{\int_{0}^{L} \cfrac{1}{d\varphi^2}\  dx}}  \Biggl\}.
	\]
\end{prop}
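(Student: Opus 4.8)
The plan is to absorb the unpleasant term $-\lambda^2L^2/\!\int_0^L(d\varphi^2)^{-1}$ into an auxiliary optimisation over a periodic ``phase'', which trades the non–self-adjoint eigenvalue $k_\lambda(d,r)$ for a self-adjoint Rayleigh quotient coupled to a min--max. Concretely, for $\varphi\in E_L$ and a continuous $L$-periodic $\sigma$ with mean $\langle\sigma\rangle_a=\lambda$, set
\[
\mathcal{F}(\varphi,\sigma):=\int_0^L d\,|\varphi'|^2\,dx-\int_0^L d\,\sigma^2\varphi^2\,dx-\int_0^L r\,\varphi^2\,dx .
\]
For fixed $\varphi$, minimising $\int_0^L d\varphi^2\sigma^2$ over periodic $\sigma$ with $\langle\sigma\rangle_a=\lambda$ is a Cauchy--Schwarz problem whose minimiser is $\sigma_\varphi:=\bigl(\lambda L/\!\int_0^L(d\varphi^2)^{-1}\bigr)(d\varphi^2)^{-1}$, with minimal value $\lambda^2L^2/\!\int_0^L(d\varphi^2)^{-1}$. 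Hence $\max_{\langle\sigma\rangle_a=\lambda}\mathcal{F}(\varphi,\sigma)$ is precisely the bracket in the statement, and the quantity to compute is $\min_{\varphi\in E_L}\max_{\langle\sigma\rangle_a=\lambda}\mathcal{F}(\varphi,\sigma)$.

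Next, for a fixed admissible $\sigma$ the map $\varphi\mapsto\mathcal{F}(\varphi,\sigma)$ is exactly the Rayleigh quotient of the \emph{self-adjoint} $L$-periodic operator $\mathcal{A}_\sigma:=-(d\,\cdot')'-(r+d\sigma^2)$; its principal eigenfunction is positive and smooth, so $\min_{\varphi\in E_L}\mathcal{F}(\varphi,\sigma)$ equals the principal eigenvalue $\mu(\sigma)$ of $\mathcal{A}_\sigma$ (the infimum over $E_L$ coincides with that over $H^1_{\mathrm{per}}$). Consequently, for any single admissible $\sigma_0$ and every $\varphi\in E_L$,
\[
\bigl(\text{functional at }\varphi\bigr)=\max_\sigma\mathcal{F}(\varphi,\sigma)\ \ge\ \mathcal{F}(\varphi,\sigma_0)\ \ge\ \mu(\sigma_0),
\]
so it will suffice to produce one admissible $\sigma_0$ with $\mu(\sigma_0)=k_\lambda(d,r)$, together with a $\varphi_0$ for which equality holds throughout.

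I would construct $(\varphi_0,\sigma_0)$ as the saddle point of $\mathcal{F}$ built from the eigenfunction and its adjoint. Let $\psi>0$ solve $-\mathcal{L}_{\lambda,d,r}\psi=k_\lambda(d,r)\psi$ and $\psi^*>0$ solve the adjoint problem with the same eigenvalue. The gauge identity $e^{\lambda x}\circ(-\mathcal{L}_{\lambda,d,r})\circ e^{-\lambda x}=-(d\,\cdot')'-r$ (the computation behind (\ref{eq2.5.9})), applied also with $\lambda$ replaced by $-\lambda$ since $-\mathcal{L}_{\lambda,d,r}^{*}=-\mathcal{L}_{-\lambda,d,r}$, shows that $\Phi:=e^{\lambda x}\psi$ and $\Phi^*:=e^{-\lambda x}\psi^*$ both solve the second-order ODE $-(du')'-(r+k_\lambda(d,r))u=0$ on $\mathbb{R}$; as $\lambda>0$ these are the two Floquet solutions, with distinct multipliers $e^{\pm\lambda L}$, hence independent, so their modified Wronskian $d\bigl(\Phi(\Phi^*)'-\Phi'\Phi^*\bigr)$ is a nonzero constant. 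Rewriting it in terms of $\psi\psi^*$ gives $d\,\psi\psi^*\,(p+p^*)\equiv\mathrm{const}$, where $p:=\lambda+(\log\psi)'$ and $p^*:=\lambda-(\log\psi^*)'$ are $L$-periodic with mean $\lambda$. Put $\varphi_0:=\sqrt{\psi\psi^*}\big/\|\sqrt{\psi\psi^*}\|_{L^2(0,L)}\in E_L$ and $\sigma_0:=\tfrac12(p+p^*)$; then $\langle\sigma_0\rangle_a=\lambda$ and $d\varphi_0^2\sigma_0\equiv\mathrm{const}$, which forces $\sigma_0=\sigma_{\varphi_0}$, the optimiser of the first step at $\varphi_0$. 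Finally, writing $\Phi=\varphi_0 e^{S}$ with $S'=\sigma_0$ and substituting into $-(d\Phi')'-(r+k_\lambda(d,r))\Phi=0$, the constancy of $d\varphi_0^2\sigma_0$ makes the first-order cross terms cancel and leaves $-(d\varphi_0')'-(r+d\sigma_0^2)\varphi_0=k_\lambda(d,r)\varphi_0$; since $\varphi_0>0$, it is the principal eigenfunction of $\mathcal{A}_{\sigma_0}$, so $\mu(\sigma_0)=k_\lambda(d,r)$.

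Combining: by the displayed chain the functional is $\ge k_\lambda(d,r)$ on all of $E_L$, while at $\varphi_0$ it equals $\mathcal{F}(\varphi_0,\sigma_0)$ (because $\sigma_0$ is the maximiser for $\varphi_0$), which equals $\mu(\sigma_0)=k_\lambda(d,r)$ (because $\varphi_0$ minimises the Rayleigh quotient of $\mathcal{A}_{\sigma_0}$); hence the minimum is $k_\lambda(d,r)$, attained at $\varphi_0=\sqrt{\psi\psi^*}/\|\sqrt{\psi\psi^*}\|_{L^2}$, and $(\varphi_0,\sigma_0)$ is in fact a saddle point of $\mathcal{F}$. The main obstacle is the structural observation in the first step — recognising that $-\lambda^2L^2/\!\int(d\varphi^2)^{-1}$ is exactly the cost of an optimal periodic phase, so that after absorbing it $-\mathcal{L}_{\lambda,d,r}$ is replaced by the self-adjoint family $\mathcal{A}_\sigma$ — together with the verification that the candidate built from $\psi,\psi^*$ is genuinely a saddle point; the latter hinges on the constancy of the modified Wronskian of the two Floquet solutions (equivalently $d\varphi_0^2\sigma_0\equiv\mathrm{const}$), which is precisely what annihilates the first-order terms. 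The regularity points (smoothness and positivity of $\psi,\psi^*$, and that the $E_L$-infimum equals the $H^1_{\mathrm{per}}$-infimum) are routine.
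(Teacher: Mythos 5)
The paper states this proposition without proof, citing Nadin \cite{N1}, so there is no in-paper argument to compare against; judged on its own, your proof is correct and complete. The three pillars all check out: (i) the Cauchy--Schwarz step correctly identifies the bracket as $\max_{\langle\sigma\rangle_a=\lambda}\mathcal{F}(\varphi,\sigma)$ with optimiser $\sigma_\varphi\propto (d\varphi^2)^{-1}$; (ii) the gauge computation $-(d(e^{\lambda x}\psi)')'-r e^{\lambda x}\psi=e^{\lambda x}(-\mathcal{L}_{\lambda,d,r}\psi)$ and the identity $(-\mathcal{L}_{\lambda,d,r})^{*}=-\mathcal{L}_{-\lambda,d,r}$ are both right, so $\Phi=e^{\lambda x}\psi$ and $\Phi^{*}=e^{-\lambda x}\psi^{*}$ are genuinely independent Floquet solutions and the constancy of $d\,\varphi_0^2\sigma_0$ follows from the Wronskian; (iii) with $d\,\varphi_0^2\sigma_0\equiv C$ one indeed gets $(d(\varphi e^{S})')'=e^{S}\bigl((d\varphi')'+d\sigma^2\varphi\bigr)$, hence $\mathcal{A}_{\sigma_0}\varphi_0=k_\lambda(d,r)\varphi_0$ with $\varphi_0>0$, forcing $\mu(\sigma_0)=k_\lambda(d,r)$, and the normalisation $\langle\sigma_0\rangle_a=\lambda$ pins down $C=\lambda L/\!\int_0^L(d\varphi_0^2)^{-1}$ so that $\sigma_0=\sigma_{\varphi_0}$, closing the min--max sandwich. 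This is essentially the argument in Nadin's original paper (which also builds the minimiser as $\sqrt{\psi\psi^{*}}$ from the eigenfunctions of the operator and its adjoint); your min--max framing via the auxiliary phase $\sigma$ is a clean way to organise it. The only points you defer --- existence, positivity and smoothness of $\psi,\psi^{*}$ via Krein--Rutman, and the equality of the $E_L$- and $H^1_{\mathrm{per}}$-infima for the self-adjoint Rayleigh quotient --- are indeed standard.
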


By using this formula, Nadin studied the dependence of the period $L>0$ on the minimal speed $c_d^*(r)$, and derived the lower estimate of the minimal speed which we investigate as a corollary.
Define
\[
d_L(x)=d(x/L),\, r_L(x)=r(x/L),
\]
where $d(x)$ and $r(x)$ are 1-periodic functions.
Set $c^*_L=c_{d_L}^*(r_L)$.
\begin{prop}[Nadin \cite{N1}]\label{tr4}
	The following statements hold:
	\begin{itemize}
		\item[$(1)$] The function $L \mapsto k_{\lambda}(d_L, r_L)$ and $L \mapsto c^*_L$ are nondecreasing.
		Moreover,
		\begin{eqnarray*}
			\lim_{L\to 0} k_{\lambda}(d_L, r_L) &=& -\langle\, r\, \rangle_a -\lambda^2 \langle\, d\, \rangle_h,\\
			\lim_{L\to 0} c^*_L &=& 2\sqrt{\langle\, d\, \rangle_h\langle\, r\, \rangle_a}.
		\end{eqnarray*}
		\item[$(2)$] For any $L>0$,
		\[
		c^*_L \geq 2\sqrt{\langle\, d\, \rangle_h\langle\, r\, \rangle_a}.
		\]
	\end{itemize}
\end{prop}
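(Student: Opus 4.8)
The plan is to reduce every assertion to the variational characterization of Proposition~\ref{tr3}, after a change of variables that isolates the $L$-dependence in a single explicit factor. \emph{Step 1 (rescaled variational formula).} Apply Proposition~\ref{tr3} to the coefficients $d_L,r_L$ on a period of length $L$, and in the minimizing functional substitute $y=x/L$, writing $g(y)=L^{1/2}\varphi(Ly)$ for $\varphi\in E_L$; as $\varphi$ ranges over $E_L$, the function $g$ ranges over $E_1$. Since $d_L(x)=d(x/L)$ and $r_L(x)=r(x/L)$ with $d,r$ the $1$-periodic generating functions, a direct change of variables turns the three terms of the functional into $L^{-2}\int_0^1 d\,|g'|^2\,dy$, $\int_0^1 r\,g^2\,dy$ and $\lambda^2\big(\int_0^1 (dg^2)^{-1}\,dy\big)^{-1}$, so that
\[
k_\lambda(d_L,r_L)=\min_{g\in E_1}\left\{\frac{1}{L^2}\int_0^1 d\,|g'|^2\,dy-\int_0^1 r\,g^2\,dy-\frac{\lambda^2}{\displaystyle\int_0^1 \frac{1}{d\,g^2}\,dy}\right\}.
\]
Note also that $\langle\, r_L\,\rangle_a=\langle\, r\,\rangle_a=\int_0^1 r\,dy$ and $\langle\, d_L\,\rangle_h=\langle\, d\,\rangle_h=\big(\int_0^1 d^{-1}\,dy\big)^{-1}$ do not depend on $L$.

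\emph{Step 2 (monotonicity and the lower bound in part (2)).} In the bracket above, the only occurrence of $L$ is the factor $L^{-2}$ multiplying the nonnegative quantity $\int_0^1 d\,|g'|^2\,dy$; hence for each fixed $g\in E_1$ the bracket is nonincreasing in $L$, and therefore so is $k_\lambda(d_L,r_L)$. Equivalently $L\mapsto -k_\lambda(d_L,r_L)/\lambda$ is nondecreasing for every $\lambda>0$, and since the pointwise minimum over $\lambda>0$ of a family of nondecreasing functions of $L$ is nondecreasing, the formula $c^*_L=\min_{\lambda>0}\big(-k_\lambda(d_L,r_L)/\lambda\big)$ gives that $L\mapsto c^*_L$ is nondecreasing. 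Next, inserting the test function $g\equiv 1\in E_1$ into the formula of Step~1 yields, for all $\lambda>0$ and $L>0$,
\[
k_\lambda(d_L,r_L)\le -\langle\, r\,\rangle_a-\lambda^2\langle\, d\,\rangle_h,
\]
so $-k_\lambda(d_L,r_L)/\lambda\ge \langle\, r\,\rangle_a/\lambda+\lambda\langle\, d\,\rangle_h\ge 2\sqrt{\langle\, d\,\rangle_h\langle\, r\,\rangle_a}$ by the arithmetic--geometric mean inequality; minimizing over $\lambda$ gives $c^*_L\ge 2\sqrt{\langle\, d\,\rangle_h\langle\, r\,\rangle_a}$, which is part (2) (and is exactly (\ref{ine}) applied to $d_L,r_L$).

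\emph{Step 3 (the limit $L\to 0$).} Fix $\lambda>0$ and let $g_L\in E_1$ attain the minimum in Step~1. The ``lower-order'' part of the functional is bounded on $E_1$ uniformly in $g$: indeed $\int_0^1 r\,g^2\,dy\le\|r\|_\infty$, while $\int_0^1 (dg^2)^{-1}\,dy\ge\|d\|_\infty^{-1}\int_0^1 g^{-2}\,dy\ge\|d\|_\infty^{-1}$ by Cauchy--Schwarz and $\int_0^1 g^2\,dy=1$, so the last term is at most $\lambda^2\|d\|_\infty$. Together with the upper bound of Step~2 this forces $L^{-2}\int_0^1 d\,|g_L'|^2\,dy$ to stay bounded, whence $\int_0^1 |g_L'|^2\,dy=O(L^2)\to 0$ because $\inf d>0$. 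By the Poincar\'e--Wirtinger inequality and $\int_0^1 g_L^2\,dy=1$ we obtain $g_L\to 1$ in $H^1(0,1)$, and by the embedding $H^1(0,1)\hookrightarrow C([0,1])$ this convergence is uniform; in particular $g_L^{-2}\to 1$ uniformly, so $\int_0^1 r\,g_L^2\,dy\to\langle\, r\,\rangle_a$ and $\int_0^1 (dg_L^2)^{-1}\,dy\to\langle\, d\,\rangle_h^{-1}$. Since
\[
L^{-2}\int_0^1 d\,|g_L'|^2\,dy=k_\lambda(d_L,r_L)+\int_0^1 r\,g_L^2\,dy+\frac{\lambda^2}{\displaystyle\int_0^1 (dg_L^2)^{-1}\,dy}
\]
is squeezed between $0$ and a quantity tending to $0$ (by the upper bound of Step~2), it tends to $0$, and hence $\lim_{L\to 0}k_\lambda(d_L,r_L)=-\langle\, r\,\rangle_a-\lambda^2\langle\, d\,\rangle_h$. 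Finally, for each fixed $\lambda$, $c^*_L\le -k_\lambda(d_L,r_L)/\lambda\to \langle\, r\,\rangle_a/\lambda+\lambda\langle\, d\,\rangle_h$, so $\limsup_{L\to 0}c^*_L\le 2\sqrt{\langle\, d\,\rangle_h\langle\, r\,\rangle_a}$ after minimizing over $\lambda$; combined with part (2) (which gives $\liminf_{L\to 0}c^*_L\ge 2\sqrt{\langle\, d\,\rangle_h\langle\, r\,\rangle_a}$), this gives $\lim_{L\to 0}c^*_L=2\sqrt{\langle\, d\,\rangle_h\langle\, r\,\rangle_a}$.

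\emph{Main obstacle.} Steps~1 and~2 are purely algebraic once the rescaled formula is in hand; the only genuine analysis is in Step~3, namely showing that the minimizers $g_L$ converge to the constant $1$ strongly enough to pass to the limit in the nonlocal term $\lambda^2\big(\int_0^1 (dg^2)^{-1}\,dy\big)^{-1}$. This is where the hypothesis $\inf d>0$ is used (to turn a bound on $\int_0^1 d\,|g_L'|^2\,dy$ into one on $\int_0^1 |g_L'|^2\,dy$), together with the Cauchy--Schwarz lower bound on $\int_0^1 g^{-2}\,dy$ (which rules out concentration of $g_L$ near a zero) and the one-dimensional Sobolev embedding (to upgrade $H^1$-convergence to uniform convergence so that $g_L^{-2}$ is controlled).
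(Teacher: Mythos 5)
Your argument is correct and in fact more complete than what the paper supplies. The paper cites Proposition~\ref{tr4} from Nadin \cite{N1} essentially without proof and only reproves part (2), as inequality (\ref{ine}) in subsection 3.2, by exactly the test-function computation of your Step~2 (take $\varphi_0\equiv 1/\sqrt L$, i.e.\ $g\equiv 1$ after rescaling, then apply the arithmetic--geometric mean inequality). Your Steps~1 and~3 --- isolating the $L$-dependence in the single factor $L^{-2}$ in front of the gradient term, and then showing that the minimizers $g_L$ converge uniformly to $1$ so that one can pass to the limit in the nonlocal term $\lambda^2\bigl(\int_0^1(dg^2)^{-1}\bigr)^{-1}$ --- constitute a self-contained proof of part (1), which the paper does not attempt. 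The compactness argument (uniform bound on $L^{-2}\int_0^1 d|g_L'|^2$, Poincar\'e--Wirtinger plus the normalization $\int_0^1 g_L^2=1$ to identify the limit as the constant $1$, and the embedding $H^1(0,1)\hookrightarrow C([0,1])$ to control $g_L^{-2}$) is sound, granting, as Proposition~\ref{tr3} does, that the minimum defining $k_\lambda$ is attained.

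One point deserves flagging. Your rescaled formula shows that $L\mapsto k_\lambda(d_L,r_L)$ is \emph{nonincreasing} (for each fixed $g$ the bracket decreases as $L$ grows), whereas the statement literally says ``nondecreasing''. Your direction is the internally consistent one: the test function $g\equiv 1$ gives $k_\lambda(d_L,r_L)\le -\langle\, r\,\rangle_a-\lambda^2\langle\, d\,\rangle_h$ for every $L>0$, while the claimed limit as $L\to 0$ is exactly $-\langle\, r\,\rangle_a-\lambda^2\langle\, d\,\rangle_h$; if $k_\lambda$ were nondecreasing in $L$ these two facts would force $k_\lambda(d_L,r_L)\equiv-\langle\, r\,\rangle_a-\lambda^2\langle\, d\,\rangle_h$ and hence $c^*_L\equiv 2\sqrt{\langle\, d\,\rangle_h\langle\, r\,\rangle_a}$ for all $L$, contradicting Theorem~\ref{main3} and the second example in subsection~3.1. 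So the adjective ``nondecreasing'' in the statement should be understood as applying to $-k_\lambda(d_L,r_L)/\lambda$ and to $c^*_L$, which is precisely what your Step~2 establishes; no change to your proof is needed.
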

The condition (\ref{condition}) is first introduced by El Smaily-Hamel-Roques.
\begin{prop}[El Smaily-Hamel-Roques \cite{condition}]\label{LSm}
	For some $L_0>0$, the map $L \mapsto c^*_L$ is in $C^{\infty}(0,L_0)$.
	Moreover,
	\[
	\lim_{L\to 0} \cfrac{dc^*_L}{dL}=0,\,\lim_{L\to 0} \cfrac{d^2c^*_L}{dL^2}\geq 0.
	\]
	Finally, the following two statements are equivalent:
	\begin{itemize}
		\item[$(1)$] $\underset{L\to 0}{\lim}\cfrac{d^2c^*_L}{dL^2}> 0$.
		\item[$(2)$] $\cfrac{r}{\langle\, r\, \rangle_a} + \cfrac{\langle\, d\, \rangle_h}{d} \neq 2$.
	\end{itemize}
\end{prop}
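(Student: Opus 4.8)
The plan is to reduce everything to the small-$L$ behaviour of the inner minimization in Nadin's variational formula (Proposition~\ref{tr3}) after rescaling to the unit cell, and to handle the outer minimization over $\lambda$ by the envelope theorem. First I rescale $x=Ly$: this turns $-\mathcal{L}_{\lambda,d_L,r_L}$, acting on $L$-periodic functions, into $L^{-2}\bigl(-\mathcal{L}_{\lambda L,\,d,\,L^2 r}\bigr)$, acting on $1$-periodic functions, so that $k_\lambda(d_L,r_L)=L^{-2}K(\lambda L,\,L^2 r)$ where $K(\mu,\rho)$ denotes the principal eigenvalue of $-\mathcal{L}_{\mu,d,\rho}$. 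This eigenvalue is simple and isolated and depends analytically on the parameters (polynomially in $\mu$, linearly in $\rho$), so $K$ is jointly real-analytic near $(0,0)$; moreover the formal adjoint of $-\mathcal{L}_{\mu,d,\rho}$ is $-\mathcal{L}_{-\mu,d,\rho}$, whence $K(\mu,\rho)=K(-\mu,\rho)$. Since $K(0,0)=0$ and $K$ is even in $\mu$, the map $L\mapsto L^{-2}K(\lambda L,L^2 r)$ extends to an \emph{even}, real-analytic function of $L$ across $L=0$, jointly analytic in $(\lambda,L)$; by Proposition~\ref{tr4} its value at $L=0$ is $-(\lambda^2\langle d\rangle_h+\langle r\rangle_a)$, so
\[
k_\lambda(d_L,r_L)=-\bigl(\lambda^2\langle d\rangle_h+\langle r\rangle_a\bigr)+L^2\,\kappa(\lambda)+O(L^4),
\]
with no odd powers of $L$.

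Next, $c^*_L=\min_{\lambda>0}g_L(\lambda)$ with $g_L(\lambda):=-k_\lambda(d_L,r_L)/\lambda$; the limit $g_0(\lambda)=\lambda\langle d\rangle_h+\langle r\rangle_a/\lambda$ has a unique nondegenerate minimizer $\lambda_*=\sqrt{\langle r\rangle_a/\langle d\rangle_h}$, and $g_L(\lambda)\to\infty$ as $\lambda\to0^+$ or $\lambda\to\infty$ uniformly for small $L$ (using $k_0(d_L,r_L)<0$, which holds by~(\ref{asump})). The implicit function theorem then makes $\lambda_L$ a smooth even function of $L$ near $0$ and $c^*_L=g_L(\lambda_L)$ smooth (indeed analytic) and even on $(-L_0,L_0)$; this gives $L\mapsto c^*_L\in C^\infty(0,L_0)$ and, $dc^*_L/dL$ being odd in $L$, also $\lim_{L\to0}dc^*_L/dL=0$. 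By the envelope theorem $dc^*_L/dL=\partial_L g_L(\lambda_L)$, and since $\partial_L g_L(\lambda)$ is odd in $L$ the mixed term drops, so
\[
\lim_{L\to0}\frac{d^2c^*_L}{dL^2}=\partial_L^2 g_L(\lambda_*)\big|_{L=0}=-\frac{2\kappa(\lambda_*)}{\lambda_*}.
\]
Thus the proposition reduces to showing $\kappa(\lambda_*)\le0$, with equality exactly under the stated condition.

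To compute $\kappa(\lambda)$ I use Nadin's formula in rescaled form: $k_\lambda(d_L,r_L)=\min_{\varphi\in E_1}Q_L[\varphi]$, where $E_1=\{\varphi\in C^1_{\mathrm{per}}:\varphi>0,\ \int_0^1\varphi^2=1\}$ and $Q_L[\varphi]=L^{-2}\int_0^1 d|\varphi'|^2-\int_0^1 r\varphi^2-\lambda^2\bigl(\int_0^1(d\varphi^2)^{-1}\bigr)^{-1}$. A priori estimates (from $Q_L[\varphi_L]\le Q_L[1]$, together with $\int_0^1\varphi_L^{-2}\ge1$ by Cauchy--Schwarz) and the Euler--Lagrange equation force $\varphi_L=1+L^2\xi_L$ with $\{\xi_L\}$ bounded; expanding $Q_L$ to order $L^2$ and using the constraint $\int_0^1\varphi_L^2=1$ (so $\int_0^1\xi_L=O(L^2)$) yields
\[
\kappa(\lambda)=\min_{\xi}\Bigl\{\int_0^1 d|\xi'|^2\,dy-2\int_0^1\Bigl[(r-\langle r\rangle_a)+\lambda^2\langle d\rangle_h^2\Bigl(\tfrac1d-\tfrac1{\langle d\rangle_h}\Bigr)\Bigr]\xi\,dy\Bigr\},
\]
the minimum over $1$-periodic $\xi$ with $\int_0^1\xi=0$. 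The quadratic term is $\ge0$ and vanishes only on constants, so this quantity is $\le0$ (test $\xi\equiv0$), and it equals $0$ precisely when the linear functional vanishes identically on $\{\int_0^1\xi=0\}$, i.e. when the mean-zero function $(r-\langle r\rangle_a)+\lambda^2\langle d\rangle_h^2(\tfrac1d-\tfrac1{\langle d\rangle_h})$ is identically zero. At $\lambda=\lambda_*$ one has $\lambda_*^2\langle d\rangle_h^2=\langle r\rangle_a\langle d\rangle_h$, and this condition collapses exactly to $\tfrac{r}{\langle r\rangle_a}+\tfrac{\langle d\rangle_h}{d}=2$. Hence $\lim_{L\to0}d^2c^*_L/dL^2=-2\kappa(\lambda_*)/\lambda_*\ge0$, with strict inequality if and only if $\tfrac{r}{\langle r\rangle_a}+\tfrac{\langle d\rangle_h}{d}\not\equiv2$, which is the asserted equivalence.

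The step I expect to be the main obstacle is the rigorous justification of this second-order expansion, i.e. that $\kappa(\lambda)$ equals the minimum of the limiting functional above. Because of the stiff factor $L^{-2}$ in front of $\int_0^1 d|\varphi'|^2$, this is a singular-perturbation ($\Gamma$-convergence type) statement: one must establish compactness of the minimizers at the correct scale ($\varphi_L-1=L^2\xi_L$ with $\xi_L$ bounded in $H^1$), control the $O(L^4)$ remainder uniformly, and reconcile the sphere constraint $\int_0^1\varphi_L^2=1$ with the linear constraint $\int_0^1\xi=0$ in the limit; the upper bound comes from testing $\varphi=1+L^2\xi$ and the lower bound from weak lower semicontinuity of $\int_0^1 d|\xi'|^2$. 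The remaining ingredients — analyticity of the simple eigenvalue, the implicit function theorem for the $\lambda$-minimization, and the envelope theorem — are routine once the expansion is in hand.
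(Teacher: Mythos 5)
This proposition is quoted in the paper from El Smaily--Hamel--Roques \cite{condition} and is not proved here, so there is no in-paper argument to compare against; I have therefore checked your reconstruction on its own terms. Your strategy is sound and is essentially the homogenization argument of the original reference: rescale to the unit cell so that $k_\lambda(d_L,r_L)=L^{-2}\tilde K(\lambda L,L^2)$ with $\tilde K$ jointly analytic and even in its first argument (the evenness is even more immediate from Proposition~\ref{tr3}, since $I(\varphi;\lambda,d,r)$ depends on $\lambda$ only through $\lambda^2$), deduce an even expansion $k_\lambda(d_L,r_L)=-(\lambda^2\langle d\rangle_h+\langle r\rangle_a)+L^2\kappa(\lambda)+O(L^4)$, and kill the cross term in $d^2c^*_L/dL^2$ at $L=0$ by nondegeneracy of $\lambda_*$. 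I verified your second-order expansion of $Q_L$ and the resulting corrector functional: the minimum over mean-zero $\xi$ is $\le 0$, and it vanishes exactly when the mean-zero function $(r-\langle r\rangle_a)+\lambda^2\langle d\rangle_h^2\bigl(\tfrac1d-\tfrac1{\langle d\rangle_h}\bigr)$ is identically zero, which at $\lambda=\lambda_*$ is precisely $\tfrac{r}{\langle r\rangle_a}+\tfrac{\langle d\rangle_h}{d}=2$; so the sign dichotomy does deliver the stated equivalence. The one step you leave as a sketch --- the rigorous justification that $\kappa(\lambda)$ equals the corrector minimum --- is genuinely needed for the ``$(2)\Rightarrow(1)$'' direction, but it is fillable by elementary means: from $Q_L[\varphi_L]\le Q_L[1]$ and $\int_0^1\varphi_L^{-2}\ge 1$ one gets $\|\varphi_L'\|_{L^2}=O(L)$, and then writing $\varphi_L=c_L+w_L$ with $\int w_L=0$ and balancing $cL^{-2}\|w_L\|_{H^1}^2$ against the $O(\|w_L\|_{H^1})$ linear term yields $\|w_L\|_{H^1}=O(L^2)$, which is exactly the compactness scale you need; alternatively, once joint analyticity is established, $\kappa(\lambda)$ can be computed by standard Rayleigh--Schr\"odinger perturbation of the simple eigenvalue, bypassing the $\Gamma$-convergence argument entirely. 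Two minor imprecisions: it is the operator, not the eigenvalue, that depends affinely on $\rho$ (this is what Kato's theory actually requires), and for the case where condition~(\ref{condition}) holds you could shortcut the lower bound by invoking Theorem~\ref{main3}, which gives $c^*_L\equiv 2\sqrt{\langle d\rangle_h\langle r\rangle_a}$ for all $L$ and hence all derivatives zero.
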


\subsection{Main results}\label{Main}
For any $\varphi \in E_L,\, \lambda>0,\,r \in C_{\mathrm{per}}(\mathbb{R}),\, d \in C^1_{\mathrm{per}}(\mathbb{R})$, set
\[
I(\varphi;\lambda,d,r):=\int_{0}^{L}d|\varphi '|^2 \ dx -\int_{0}^{L} r \varphi^2 \ dx - \cfrac{\lambda^2L^2}{\displaystyle{\int_{0}^{L} \cfrac{1}{d\varphi^2}\  dx}}
\]
By Proposition \ref{tr4}, we can rewrite the principal eigenvalue $k_{\lambda}(d,r)$ as
\[
k_{\lambda}(d,r) =\min_{\varphi \in E_L} I(\varphi;\lambda,d,r).
\]
Hence the spreading speed $c^*_d(r)$ is also rewrote as
\begin{equation}\label{minispeed}
c^*_d(r) = -\max_{\lambda>0} \min_{\varphi \in E_L} \lambda^{-1} I(\varphi;\lambda,d,r)
\end{equation}
from the formula (\ref{ck}).
Now we state our main results.
\begin{thm}\label{main3}
	The following statements are equivalent:
	\begin{itemize}
		\item[$(1)$] $c^{*}_{d}(r) = 2 \sqrt{\langle\, d\, \rangle_h \langle\, r\, \rangle_a}$.
		\item[$(2)$] $\cfrac{r}{\langle\, r\, \rangle_a} + \cfrac{\langle\, d\, \rangle_h}{d} = 2$.
		\item[$(3)$] Set $\lambda_0=\sqrt{\langle\, r\, \rangle_a/\langle\, d\, \rangle_h}$ and $\varphi_0 \equiv 1/\sqrt{L}$.
		Then
		\begin{equation}\label{var}
		c^*_d(r) = -\lambda_0^{-1} I(\varphi_0;\lambda_0,d,r).
		\end{equation}
	\end{itemize}
	Moreover, $(\lambda_0,\varphi_0)$ is unique pair that satisfy (\ref{var}). 
\end{thm}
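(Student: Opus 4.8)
The plan is to prove $(2)\Rightarrow(1)$ and $(1)\Rightarrow(2)$, to dispose of $(1)\Leftrightarrow(3)$ by a direct computation, and then to read off the uniqueness from the same estimates. Everything rests on one elementary computation: inserting the constant function $\varphi_0\equiv 1/\sqrt L$ into $I$. Since $\varphi_0'\equiv 0$, $\int_0^L\varphi_0^2\,dx=1$ and $\int_0^L (d\varphi_0^2)^{-1}\,dx=L\int_0^L d^{-1}\,dx=L^2/\langle\,d\,\rangle_h$, one gets, for every $\lambda>0$,
\[
I(\varphi_0;\lambda,d,r)=-\langle\,r\,\rangle_a-\lambda^2\langle\,d\,\rangle_h,\qquad -\lambda^{-1}I(\varphi_0;\lambda,d,r)=\frac{\langle\,r\,\rangle_a}{\lambda}+\lambda\langle\,d\,\rangle_h\ \ge\ 2\sqrt{\langle\,d\,\rangle_h\langle\,r\,\rangle_a},
\]
with equality in the last inequality if and only if $\lambda=\lambda_0$. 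In particular $-\lambda_0^{-1}I(\varphi_0;\lambda_0,d,r)=2\sqrt{\langle\,d\,\rangle_h\langle\,r\,\rangle_a}$, so $(\ref{var})$ holds precisely when $c^*_d(r)=2\sqrt{\langle\,d\,\rangle_h\langle\,r\,\rangle_a}$, which is $(1)\Leftrightarrow(3)$. Moreover, using $\varphi_0$ as a test function in Proposition~\ref{tr3} gives $k_\lambda(d,r)\le I(\varphi_0;\lambda,d,r)=-\langle\,r\,\rangle_a-\lambda^2\langle\,d\,\rangle_h$, whence $-k_\lambda(d,r)/\lambda\ge\langle\,r\,\rangle_a/\lambda+\lambda\langle\,d\,\rangle_h\ge 2\sqrt{\langle\,d\,\rangle_h\langle\,r\,\rangle_a}$ for all $\lambda$; together with $(\ref{ck})$ this re-proves the lower bound $c^*_d(r)\ge 2\sqrt{\langle\,d\,\rangle_h\langle\,r\,\rangle_a}$.

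For $(2)\Rightarrow(1)$ I would substitute $r=2\langle\,r\,\rangle_a-\langle\,r\,\rangle_a\langle\,d\,\rangle_h/d$ (which is $(2)$ rearranged) into $I(\cdot;\lambda_0,d,r)$ and use $\int_0^L\varphi^2\,dx=1$ and $\lambda_0^2\langle\,d\,\rangle_h=\langle\,r\,\rangle_a$ to obtain, for every $\varphi\in E_L$,
\[
I(\varphi;\lambda_0,d,r)-I(\varphi_0;\lambda_0,d,r)=\int_0^L d\,|\varphi'|^2\,dx+\langle\,r\,\rangle_a\left(\langle\,d\,\rangle_h\int_0^L\frac{\varphi^2}{d}\,dx-\frac{L^2}{\langle\,d\,\rangle_h\int_0^L (d\varphi^2)^{-1}\,dx}\right).
\]
The first term is nonnegative, and by the Cauchy--Schwarz inequality $\int_0^L\frac{\varphi^2}{d}\,dx\cdot\int_0^L\frac{1}{d\varphi^2}\,dx\ge\big(\int_0^L d^{-1}\,dx\big)^2=L^2/\langle\,d\,\rangle_h^2$ the bracket is nonnegative as well. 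Hence $k_{\lambda_0}(d,r)=\min_{\varphi\in E_L}I(\varphi;\lambda_0,d,r)=I(\varphi_0;\lambda_0,d,r)$, so $-k_{\lambda_0}(d,r)/\lambda_0=2\sqrt{\langle\,d\,\rangle_h\langle\,r\,\rangle_a}$, and $(\ref{ck})$ gives $c^*_d(r)\le 2\sqrt{\langle\,d\,\rangle_h\langle\,r\,\rangle_a}$; with the lower bound this is $(1)$.

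For $(1)\Rightarrow(2)$ I would first show that $c^*_d(r)=2\sqrt{\langle\,d\,\rangle_h\langle\,r\,\rangle_a}$ forces $\varphi_0$ to be a minimizer of $I(\cdot;\lambda_0,d,r)$. By $(\ref{ck})$ the minimum over $\lambda$ is attained at some $\lambda^*$ with $-k_{\lambda^*}(d,r)/\lambda^*=2\sqrt{\langle\,d\,\rangle_h\langle\,r\,\rangle_a}$; inserting this into the chain $2\sqrt{\langle\,d\,\rangle_h\langle\,r\,\rangle_a}=-k_{\lambda^*}(d,r)/\lambda^*\ge\langle\,r\,\rangle_a/\lambda^*+\lambda^*\langle\,d\,\rangle_h\ge 2\sqrt{\langle\,d\,\rangle_h\langle\,r\,\rangle_a}$ from the first paragraph forces $\lambda^*=\lambda_0$ and $k_{\lambda_0}(d,r)=-\langle\,r\,\rangle_a-\lambda_0^2\langle\,d\,\rangle_h=I(\varphi_0;\lambda_0,d,r)$, i.e.\ $\varphi_0$ attains the minimum in Proposition~\ref{tr3} at $\lambda=\lambda_0$. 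Then I would write the Euler--Lagrange equation for this constrained minimization (Lagrange multiplier $\mu$ for $\int_0^L\varphi^2\,dx=1$; the positivity constraint is inactive at the strictly positive $\varphi_0$, so variations in every direction $\eta\in C^1_{\mathrm{per}}$ are admissible) and evaluate it at the constant $\varphi_0$: the contribution of $\int d|\varphi'|^2$ vanishes since $\varphi_0'\equiv 0$, and after differentiating the nonlinear term $\big(\int (d\varphi^2)^{-1}\big)^{-1}$ and simplifying, the condition reduces to $r+\lambda_0^2\langle\,d\,\rangle_h^2/d+\mu\equiv 0$, that is (using $\lambda_0^2\langle\,d\,\rangle_h=\langle\,r\,\rangle_a$) the function $r/\langle\,r\,\rangle_a+\langle\,d\,\rangle_h/d$ is constant. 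Averaging this identity over one period pins the constant to $1+1=2$, which is exactly $(2)$.

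Finally, the uniqueness of the optimal pair $(\lambda_0,\varphi_0)$ in $(\ref{minispeed})$ comes from tracking when the inequalities are strict. In the identity of the second paragraph, $\int_0^L d|\varphi'|^2\,dx=0$ forces $\varphi$ constant, and the Cauchy--Schwarz inequality is an equality only for $\varphi$ constant; since $\int_0^L\varphi^2\,dx=1$, either condition gives $\varphi=\varphi_0$, so $\varphi_0$ is the unique minimizer of $I(\cdot;\lambda_0,d,r)$ on $E_L$. For $\lambda\neq\lambda_0$ the first paragraph gives $-k_\lambda(d,r)/\lambda\ge\langle\,r\,\rangle_a/\lambda+\lambda\langle\,d\,\rangle_h>2\sqrt{\langle\,d\,\rangle_h\langle\,r\,\rangle_a}=-k_{\lambda_0}(d,r)/\lambda_0$, so $\lambda_0$ is the unique minimizer in $(\ref{ck})$; combining these gives uniqueness. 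The step I expect to require the most care is the Euler--Lagrange argument in $(1)\Rightarrow(2)$: one must rigorously justify that a minimizer of the non-quadratic functional $I(\cdot;\lambda_0,d,r)$ over the class $E_L$ of positive normalized $C^1_{\mathrm{per}}$ functions satisfies the first-order condition there, and carry out the variation of the term $\big(\int (d\varphi^2)^{-1}\big)^{-1}$ correctly.
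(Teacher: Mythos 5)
Your proposal is correct and follows essentially the same route as the paper: the lower bound and the identification $\lambda^*=\lambda_0$ via the constant test function $\varphi_0$, the Euler--Lagrange equation evaluated at $\varphi_0$ for $(1)\Rightarrow(2)$, the Cauchy--Schwarz inequality for $(2)\Rightarrow(1)$, and uniqueness by tracking when the inequalities are strict. Your treatment of $(1)\Leftrightarrow(3)$ is in fact slightly cleaner, since you observe that $-\lambda_0^{-1}I(\varphi_0;\lambda_0,d,r)=2\sqrt{\langle\, d\, \rangle_h\langle\, r\, \rangle_a}$ holds unconditionally, so $(3)$ as stated is literally the identity $(1)$, whereas the paper routes $(3)\Rightarrow(1)$ back through the Euler--Lagrange argument.
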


\begin{rem}
	This theorem is generalization of the result proved by Berestycki-Hamel-Roques \cite{BHR2} in some sense.
	In the case where $d$ is a constant, we can easily see that $\langle\, d\, \rangle_h =d$.
	Thus the condition (\ref{condition}) is rewrote as follows:
	\[
	r(x) = \langle\, r\, \rangle_a 
	\]
	for any $x \in \mathbb{R}$.
	It means that equality in (\ref{ine}) holds if and only if $r$ is a constant.
\end{rem}

Theorem \ref{main3} implies that the problem $(P)_d$ has the solution.
Furthermore, we see that the solution does not depend on size of the diffusion coefficient.
It depends on only the shape of the diffusion coefficient. 
\begin{cor}\label{main2}
	For any $d \in C^{1+\delta}_{\mathrm{per}}(\mathbb{R})$ with $\inf d > 0$, the minimizing problem $(P)_d$ has the solution $r_d(x)$ and it is defined by
	\[
	r_d(x) = \alpha \Big(2-\frac{\langle\, d\, \rangle_h}{d(x)}\Big)
	\]
	for any $x \in \mathbb{R}$.
	Moreover, for any $k>0$, $r_d$ is the unique solution for $(P)_{kd}$.
\end{cor}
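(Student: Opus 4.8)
The plan is to obtain Corollary \ref{main2} as a direct consequence of Theorem \ref{main3} together with Nadin's inequality (\ref{ine}). The crucial remark is that on the admissible class $\varLambda(\alpha)$ the bound in (\ref{ine}) is uniform: every $r\in\varLambda(\alpha)$ satisfies $\langle r\rangle_a=\alpha$, so (\ref{ine}) gives $c^*_d(r)\ge 2\sqrt{\langle d\rangle_h\,\alpha}$, a quantity depending only on the fixed data $d$ and $\alpha$. Thus $\inf_{r\in\varLambda(\alpha)}c^*_d(r)\ge 2\sqrt{\langle d\rangle_h\,\alpha}$, and it remains to exhibit a minimizer attaining this value.

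First I would check that $r_d(x)=\alpha\bigl(2-\langle d\rangle_h/d(x)\bigr)$ is admissible. Since $d\in C^{1+\delta}_{\mathrm{per}}(\mathbb{R})$ with $\inf d>0$, the function $1/d$ belongs to $C^{\delta}_{\mathrm{per}}(\mathbb{R})$, hence so does $r_d$; and using $\tfrac1L\int_0^L\tfrac1d\,dx=1/\langle d\rangle_h$ one computes $\langle r_d\rangle_a=\alpha(2-1)=\alpha$, so $r_d\in\varLambda(\alpha)$. Next, by construction $\dfrac{r_d}{\langle r_d\rangle_a}+\dfrac{\langle d\rangle_h}{d}=\bigl(2-\tfrac{\langle d\rangle_h}{d}\bigr)+\tfrac{\langle d\rangle_h}{d}=2$, i.e.\ the pair $(d,r_d)$ satisfies condition (\ref{condition}). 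Hence Theorem \ref{main3}, implication $(2)\Rightarrow(1)$, yields $c^*_d(r_d)=2\sqrt{\langle d\rangle_h\langle r_d\rangle_a}=2\sqrt{\langle d\rangle_h\,\alpha}$, which matches the lower bound; therefore $c^*_d(r_d)=\min_{r\in\varLambda(\alpha)}c^*_d(r)$ and $r_d$ solves $(P)_d$.

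For uniqueness I would argue the converse: if $r\in\varLambda(\alpha)$ is any solution of $(P)_d$, then $c^*_d(r)=2\sqrt{\langle d\rangle_h\,\alpha}=2\sqrt{\langle d\rangle_h\langle r\rangle_a}$, so Theorem \ref{main3}, implication $(1)\Rightarrow(2)$, forces $\tfrac{r}{\langle r\rangle_a}+\tfrac{\langle d\rangle_h}{d}=2$; since $\langle r\rangle_a=\alpha$, this says precisely $r=\alpha\bigl(2-\langle d\rangle_h/d\bigr)=r_d$. Finally, for the scaling statement I would use the elementary identity $\langle kd\rangle_h=k\langle d\rangle_h$ ($k>0$): it gives $\langle kd\rangle_h/(kd)=\langle d\rangle_h/d$, so the optimal coefficient associated with $kd$, namely $\alpha\bigl(2-\langle kd\rangle_h/(kd)\bigr)$, coincides with $r_d$; since the standing hypotheses also hold for $kd$, the arguments above applied with $d$ replaced by $kd$ show that $r_d$ is the unique solution of $(P)_{kd}$, and the case $k=1$ in particular gives uniqueness for $(P)_d$.

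There is no genuinely hard step here: the whole proof is a reduction to Theorem \ref{main3}. The two points deserving a little care are (i) recognizing that the bound (\ref{ine}) is constant on $\varLambda(\alpha)$, so that the infimum over $\varLambda(\alpha)$ is actually attained --- at $r_d$ --- and not merely a lower bound; and (ii) the harmonic-mean scaling identity $\langle kd\rangle_h=k\langle d\rangle_h$, which is exactly what makes the optimal growth coefficient depend only on the shape, not the amplitude, of the diffusion coefficient.
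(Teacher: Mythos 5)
Your proposal is correct and follows essentially the same route as the paper: existence and uniqueness of the minimizer come directly from Theorem \ref{main3} combined with the fact that the lower bound $2\sqrt{\langle\, d\, \rangle_h\,\alpha}$ from (\ref{ine}) is constant on $\varLambda(\alpha)$, and the invariance under $d\mapsto kd$ is exactly the harmonic-mean scaling identity $\langle\, kd\, \rangle_h=k\langle\, d\, \rangle_h$ that the paper also uses. Your write-up is in fact slightly more complete than the paper's, which leaves the existence/uniqueness reduction implicit and only records the scaling computation explicitly.
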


\begin{rem}
	The right-hand side of (\ref{minispeed}) can be defined even if $d \in C^{1}_{\mathrm{per}}(\mathbb{R})$ with $\inf d > 0$ and $r \in C_{\mathrm{per}}(\mathbb{R})$ with $\langle\, r\, \rangle_a>0$.
	For any $d \in C^{1}_{\mathrm{per}}(\mathbb{R})$ and $r \in C_{\mathrm{per}}(\mathbb{R})$, our results still hold if we formally define the spreading speed by the right-hand side of (\ref{ck}).
\end{rem}

Finally, we state that the principal eigenfunction is not constant in the periodic case.
\begin{thm}\label{main4}
	Assume that (\ref{condition}) holds and $\lambda_0=\sqrt{\langle\, r\, \rangle_a / \langle\, d\, \rangle_h }$.
	Set $\psi_c \equiv C$ for some constant $C\neq 0$.
	Then the following statements are equivalent:
	\begin{itemize}
		\item[$(1)$] $\psi_c$ is a principal eigenfunction of the operator $-\mathcal{L}_{\lambda_0,d,r}$.
		\item[$(2)$] The diffusion coefficient $d(x)$ is a constant function.
	\end{itemize}
\end{thm}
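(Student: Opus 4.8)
The plan is to reduce the eigenvalue equation for the constant function $\psi_c$ to a pointwise algebraic identity, and then, using the constraint (\ref{condition}), to turn that identity into a first-order \emph{autonomous} ODE for $d$ which admits no non-constant periodic solution. First I would record that, since $\psi_c \equiv C$ has $\psi_c' \equiv \psi_c'' \equiv 0$,
\[
-\mathcal{L}_{\lambda_0,d,r}\psi_c = -\big(\lambda_0^2 d(x) + \lambda_0 d'(x) + r(x)\big)\,\psi_c .
\]
Replacing $C$ by $|C|$ we may assume $C>0$; a positive $L$-periodic eigenfunction of $-\mathcal{L}_{\lambda_0,d,r}$ is necessarily a principal one (Krein--Rutman), so statement $(1)$ is equivalent to: there exists a constant $m$ with $\lambda_0^2 d(x) + \lambda_0 d'(x) + r(x) \equiv m$ on $\mathbb{R}$. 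Integrating over one period and using $\int_0^L d'\,dx = 0$ identifies $m = \lambda_0^2\langle d\rangle_a + \langle r\rangle_a$, where $\langle d\rangle_a := \frac1L\int_0^L d\,dx$.

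For $(2)\Rightarrow(1)$: if $d$ is constant then $\langle d\rangle_h = d$, so (\ref{condition}) forces $r \equiv \langle r\rangle_a$; hence $\lambda_0^2 d + \lambda_0 d' + r = \lambda_0^2 d + \langle r\rangle_a = 2\langle r\rangle_a$ by the definition of $\lambda_0$, a constant, so $\psi_c$ is a positive --- hence principal --- eigenfunction.

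For the converse $(1)\Rightarrow(2)$, which is the substantive direction, assume $\lambda_0^2 d + \lambda_0 d' + r \equiv m$. Substituting $r = \langle r\rangle_a\big(2 - \langle d\rangle_h/d\big)$ from (\ref{condition}), using $\langle r\rangle_a = \lambda_0^2\langle d\rangle_h$, and dividing by $\lambda_0^2$, this becomes
\[
\frac{d'(x)}{\lambda_0} = -d(x) + \frac{\langle d\rangle_h^{\,2}}{d(x)} + \big(\langle d\rangle_a - \langle d\rangle_h\big) =: F\big(d(x)\big).
\]
Here I would note that for $y>0$, $\,yF(y) = -y^2 + (\langle d\rangle_a - \langle d\rangle_h)y + \langle d\rangle_h^{\,2}\,$ is a downward parabola with positive constant term, so the product of its two real roots is $-\langle d\rangle_h^{\,2}<0$; thus $F$ has exactly one zero $y^+$ in $(0,\infty)$, with $F>0$ on $(0,y^+)$ and $F<0$ on $(y^+,\infty)$. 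Since $d$ is $L$-periodic and $C^1$, it attains its maximum at some $x_0$, where $d'(x_0)=0$ and hence $F(d(x_0))=0$; as $d(x_0)>0$ this forces $d(x_0)=y^+$, i.e. $d\le y^+$ everywhere, whence $F(d(x))\ge 0$ and $d'(x)=\lambda_0 F(d(x))\ge 0$ for all $x$. A nondecreasing $L$-periodic function is constant, so $d\equiv y^+$, which is $(2)$. (Equivalently, once $d(x_0)=y^+$, uniqueness for the initial value problem $d'=\lambda_0 F(d)$ --- $F$ being Lipschitz on the compact range of $d$ --- identifies $d$ with the constant solution.)

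The crux is this last direction: the point is that (\ref{condition}) is precisely what makes the constancy requirement on $\lambda_0^2 d + \lambda_0 d' + r$ collapse into an autonomous scalar ODE for $d$ --- the $r/d$ term produced by (\ref{condition}) pairs with $\lambda_0^2 d$ and leaves no explicit $x$-dependence --- after which the elementary monotonicity (or ODE-uniqueness) argument forbids a non-constant periodic $d$. Everything else is bookkeeping; the only places to be slightly careful are the Krein--Rutman remark identifying a positive eigenfunction as the principal one, and checking that the range of $d$ stays in $(0,\infty)$ so that $F$ is regular there, which is guaranteed by $\inf d>0$.
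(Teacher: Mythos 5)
Your proposal is correct, and both directions go through; but for the substantive implication $(1)\Rightarrow(2)$ you take a genuinely different route from the paper. Both arguments begin the same way, reducing $(1)$ to the pointwise identity $\lambda_0^2 d + \lambda_0 d' + r \equiv m$ and integrating over a period to pin down the constant. The paper then uses the specific value of the principal eigenvalue coming from Theorem 2.5, namely $k_{\lambda_0}(d,r) = -2\langle r\rangle_a$, so that integration yields $\lambda_0^2\langle d\rangle_a + \langle r\rangle_a = 2\langle r\rangle_a$, i.e.\ $\langle d\rangle_a = \langle d\rangle_h$; it concludes in one line from the Cauchy--Schwarz inequality $\bigl(\int_0^L d\bigr)\bigl(\int_0^L d^{-1}\bigr) \ge L^2$, whose equality case forces $d$ to be constant. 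You instead substitute (\ref{condition}) into the pointwise identity to obtain the autonomous ODE $d'/\lambda_0 = F(d)$ and rule out non-constant $L$-periodic solutions by a phase-line argument (unique positive zero of $F$, evaluation at the maximum of $d$, monotonicity, or alternatively ODE uniqueness). Your computation of $F$ and the sign analysis of $yF(y)$ check out, and the Krein--Rutman remark that a positive periodic eigenfunction must be the principal one is a legitimate (and in fact necessary) step that the paper glosses over. What the paper's route buys is brevity and the clean identity $\langle d\rangle_a = \langle d\rangle_h$; what yours buys is independence from the exact value of the eigenvalue --- you show that the equation $\lambda_0^2 d + \lambda_0 d' + r \equiv m$ admits no non-constant positive periodic solution for \emph{any} constant $m$, which is slightly stronger and does not lean on Theorem 2.5.
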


\section{Examples and proof}
\subsection{Specific examples of the main results}
In this subsection, we give some examples of our main result.
We notice that the exact value of the minimal traveling wave speed is calculable if the condition (\ref{condition}) establish.
One example is as follows.
This is the first calculable example of the minimal speed for spatially periodic Fisher-KPP equations.

\begin{ex}
	Define $r(x)$ and $d(x)$ by
	\[
	r(x) =1+\frac{1}{2} \sin x, \  d(x) = \cfrac{1}{1-\frac{1}{2} \sin x}
	\]
	for any $x \in \mathbb{R}$.
	Then, $r$ and $d$ satisfy the condition (\ref{condition}).
	By Theorem \ref{main3}, we can calculate the minimal traveling wave speed $c_d^*(r)$ for the equation $(E)$ as follows:
	\[
	c_d^*(r) =2\sqrt{\langle\, d\, \rangle_h\langle\, r\, \rangle_a} = 2.
	\]
	Then we obtain the exact value of the minimal speed.
\end{ex}

\begin{rem}
	We notice that $r_d(x)$ is large in the area where $d(x)$ is large and $r_d(x)$ is small in the area where $d(x)$ is small.
	See (\ref{minimizer}). 
\end{rem}







If $r(x)$ is fixed, equality in (\ref{ine}) may not hold by varying $d(x)$. 

\begin{ex}
	Define $r(x)$ by
	\[
	r(x) =1+2 \sin x
	\]
	for any $x \in \mathbb{R}$.
	Then
	\[
	d_r^{-1}(x)= 2 -r(x) =1-2\sin x
	\]
	and $r$ satisfy the condition (\ref{condition}). 
	However, $d_r(x)<0$ on $(\pi/3,2\pi/3)$.
	We conclude that
	\[
	c_d^*(r) > 2\sqrt{\langle\, d\, \rangle_h\langle\, r\, \rangle_a}
	\]
	for any $d \in C^1_{\mathrm{per}}(\mathbb{R})$ with $\inf d >0$.
\end{ex}



\subsection{Proof of the lower estimate}
For the readers' convenience, we give the proof of the inequality (\ref{ine}) in this subsection.
This inequality was first proved by Nadin \cite{N1}. 
\begin{proof}[Proof of (\ref{ine})]
	By Nadin's formula, we have
	\[
	k_{\lambda}(d,r) = \min_{\varphi \in E_L} \Biggr\{ \int_{0}^{L}d|\varphi '|^2 \ dx -\int_{0}^{L} r \varphi^2 \ dx - \cfrac{\lambda^2L^2}{\displaystyle{\int_{0}^{L}} \cfrac{1}{d\varphi^2}\  dx}  \Biggl\}= \min_{\varphi \in E_L} I(\varphi;\lambda,d,r).
	\]
	Taking $\varphi_0(x)=1/\sqrt{L}$ as a test function, we obtain
	\begin{eqnarray*}
	k_{\lambda}(d,r) &\leq&  \int_{0}^{L}d|\varphi '|^2 \ dx -\int_{0}^{L} r \varphi^2 \ dx- \cfrac{\lambda^2L^2}{\displaystyle{\int_{0}^{L}} \cfrac{1}{d\varphi_0^2}\ dx }\\
	&=& -\langle\, r\, \rangle_a -\lambda^2 \langle\, d\, \rangle_h.
	\end{eqnarray*}
	Therefore
	\begin{eqnarray*}
		c^*_d(r) &=& \min_{\lambda >0} -\cfrac{k_{\lambda}(d,r)}{\lambda}\\
		& \geq & \min_{\lambda >0} \Bigr( \cfrac{\langle\, r\, \rangle_a}{\lambda} +\lambda \langle\, d\, \rangle_h \Bigl)\\
		&=& 2\sqrt{\langle\, d\, \rangle_h \langle\, r\, \rangle_a}.
	\end{eqnarray*}
	The inequality $(\ref{ine})$ is proved.
\end{proof}

\subsection{Proof of the main results}
\begin{proof}[Proof of Theorem \ref{main3}]
	We first prove that $(1) \Rightarrow (2)$.
	Assume that the equality holds in (\ref{ine}).
	Define $\lambda^*>0$ and $\varphi_{\lambda}^* \in E_L$ by
	\[
	c^*_d(r) =-\cfrac{k_{\lambda^*}(d,r)}{\lambda^*},\  k_{\lambda}(d,r)= I(\varphi_{\lambda}^*;\lambda,d,r)
	\]
	for any $\lambda>0$.
	As in the proof of (\ref{ine}), for any $\lambda>0$, we have
	\[
	k_{\lambda}(d,r)= I(\varphi^*;\lambda,d,r) \leq I(\varphi_0;\lambda,d,r)=-\langle\, r\, \rangle_a -\lambda^2 \langle\, d\, \rangle_h.
	\]
	Therefore
	\[
	c^*_d(r) =-\cfrac{k_{\lambda^*}(d,r)}{\lambda^*} \geq \cfrac{\langle\, r\, \rangle_a}{\lambda^*} +\lambda^* \langle\, d\, \rangle_h \geq 2\sqrt{\langle\, d\, \rangle_h \langle\, r\, \rangle_a}.
	\]
	From the assumption $c^*_d(r) =2\sqrt{\langle\, d\, \rangle_h \langle\, r\, \rangle_a}$, it follows that
	\begin{equation}\label{minmax}
	\cfrac{\langle\, r\, \rangle_a}{\lambda^*} +\lambda^* \langle\, d\, \rangle_h = 2\sqrt{\langle\, d\, \rangle_h \langle\, r\, \rangle_a},\  k_{\lambda^*}(d,r)= I(\varphi_0;\lambda^*,d,r).
	\end{equation}
	This implies that $\lambda^*=\lambda_0=\sqrt{\langle\, r\, \rangle_a/\langle\, d\, \rangle_h}$ and the constant function $\varphi_0$ minimizes the functional $\varphi \mapsto I(\varphi;\lambda^*,d,r)$ on $E_L$.
	The constant function $\varphi_0$ also minimizes the following functional
	\[
	I(\varphi) =\cfrac{1}{\displaystyle{\int_{0}^{L}} \varphi^2 dx}\Biggr\{ \int_{0}^{L}d|\varphi '|^2dx - \int_{0}^{L} r \varphi^2 dx - \cfrac{\lambda_0^2L^2}{\displaystyle{\int_{0}^{L}} \cfrac{1}{d\varphi^2}\ dx }   \Biggl\}
	\]
	for $\varphi \in C^1_{\mathrm{per}} \setminus \{0\}$.
	We next calculate the Euler-Lagrange equation for the functional $I$ on $C^1_{\mathrm{per}} \setminus \{0\}$.
	Take a minimizing function $\varphi \in C^1_{\mathrm{per}} \setminus \{0\}$ with $\|\varphi \|_{L^2} =1$.
	For any $L$-periodic function $\psi \in C^1(\mathbb{R})$ and sufficiently small $\varepsilon >0$, we see that $\varphi+\varepsilon \psi \in C^1_{\mathrm{per}} \setminus \{0\}$ and
	\[
	I(\varphi;\lambda_0,d,r)=I(\varphi) \leq I(\varphi+\varepsilon \psi).
	\]
	Since 
	\[
	\cfrac{d}{d\varepsilon} I(\varphi+\varepsilon \psi)\Big|_{\varepsilon=0} =0
	\]
	and $\|\varphi \|_{L^2}=1$, we obtain
	\[
	\int_{0}^{L}d \varphi' \psi' dx - \int_{0}^{L}\varphi \psi\, dx -\cfrac{\lambda^2_{0}L^2}{\Big(\displaystyle{\int_{0}^{L}}\cfrac{1}{d \varphi^2}\,dx\Big)^2}\int_{0}^{L}\cfrac{\psi}{d \varphi^3}\,dx=I(\varphi) \int_{0}^{L} \varphi \psi \,dx.
	\]
	It follows that the minimizing function $\varphi$ satisfies the following Euler-Lagrange equation in the weak sense:
	\[
	- (d\varphi')' -r\varphi -\cfrac{\lambda^2_{0}L^2}{\Big(\displaystyle{\int_{0}^{L}}\cfrac{1}{d \varphi^2}\,dx\Big)^2} \cfrac{1}{d\varphi^3}=I(\varphi) \varphi.
	\]
	Substituting $\varphi_0 \equiv 1/\sqrt{L}$ into the Euler-Lagrange equation we have
	\begin{equation}\label{EL1}
	-\cfrac{1}{\sqrt{L}}\  r - \cfrac{\lambda^2_0 L^{2/3}}{d\Big(\displaystyle{\int_{0}^{L}} \cfrac{1}{d}\, dx\Big)^2} =\cfrac{1}{\sqrt{L}}\  I(\varphi_0). 
	\end{equation}
	Since
	\begin{equation}\label{EL2}
	\langle\, d\, \rangle_h=\Big(\frac{1}{L}\int_{0}^{L} \frac{1}{d(x)}\ dx\Big)^{-1},\  \lambda_0 = \sqrt{\langle\, r\, \rangle_a/\langle\, d\, \rangle_h},
	\end{equation}
	we obtain 
	\begin{equation}\label{EL3}
	I(\varphi_0)=-\langle\, r\, \rangle_a - \lambda_0^2 \langle\, d\, \rangle_h =-2\langle\, r\, \rangle_a.
	\end{equation}
	From (\ref{EL2}) and (\ref{EL3}), we can rewrite (\ref{EL1}) as
	\[
	\cfrac{r}{\langle\, r\, \rangle_a} + \cfrac{\langle\, d\, \rangle_h}{d} = 2,
	\]
	which is the desired conclusion.
	
	We next prove that $(1) \Leftarrow (2)$.
	Since we know $c^*_d(r_d)\geq 2\sqrt{\langle\, d\, \rangle_h\langle\, r_d\, \rangle_a}$, it is sufficient to prove that the converse inequality.
	We have
	\begin{equation}\label{ine5}
	c^*_d(r_d)=\min_{\lambda>0} \Big(- \cfrac{k_{\lambda}(d, r_d)}{\lambda}\Big) \leq - \cfrac{k_{\lambda_0}(d, r_d)}{\lambda_0},
	\end{equation}
	where $\lambda_0=\sqrt{\langle\, r_d\, \rangle_a/\langle\, d\, \rangle_h}$.
	Nadin's formula and (\ref{minimizer}) show that
	\begin{eqnarray*}
	k_{\lambda_0}(d, r_d) &=& \min_{\varphi \in E_L} \Biggr\{  \int_{0}^{L}d|\varphi '|^2 \ dx -\int_{0}^{L} r_d \varphi^2 \ dx - \cfrac{\lambda_0^2L^2}{\displaystyle{\int_{0}^{L} \cfrac{1}{d\varphi^2}\  dx}} \Biggl\}\\
	&\geq&  \min_{\varphi \in E_L} \Biggr\{ -\int_{0}^{L} r_d \varphi^2 \ dx - \cfrac{\lambda_0^2L^2}{\displaystyle{\int_{0}^{L} \cfrac{1}{d\varphi^2}\  dx}} \Biggl\}\\
	&=& -2\langle\, r_d\, \rangle_a+ \min_{\varphi \in E_L} \Biggr\{ \langle\, d\, \rangle_h\langle\, r_d\, \rangle_a \int_{0}^{L} \cfrac{1}{d} \ \varphi^2 \ dx - \cfrac{\lambda_0^2L^2}{\displaystyle{\int_{0}^{L} \cfrac{1}{d\varphi^2}\  dx}} \Biggl\}.
	\end{eqnarray*}
	By the Cauchy-Schwarz inequality, we have
	\begin{equation*}
	\Big(\int_{0}^{L} \cfrac{1}{d} \ \varphi^2 \ dx\Big) \Big(\int_{0}^{L} \cfrac{1}{d\varphi^2}\ dx\Big) \geq \Big(\int_{0}^{L} \cfrac{1}{d}\ dx\Big)^2=L^2 \langle\, d\, \rangle_h^{-2}.
	\end{equation*}
	We thus get
	\begin{equation*}
	\langle\, d\, \rangle_h\langle\, r_d\, \rangle_a \int_{0}^{L} \cfrac{1}{d} \ \varphi^2 \ dx - \cfrac{\lambda_0^2L^2}{\displaystyle{\int_{0}^{L} \cfrac{1}{d\varphi^2}\  dx}}   \geq
	\cfrac{L^2 (\langle\, d\, \rangle_h^{-1}\langle\, r\, \rangle_a -\lambda_0^2)}{\displaystyle{\int_{0}^{L} \cfrac{1}{d\varphi^2}\  dx}} =0
	\end{equation*}
	for any $\varphi \in E_L$.
	This gives
	\begin{equation}\label{ine4}
	k_{\lambda_0}(d, r_d)\geq -2\langle\, r_d \, \rangle_a.
	\end{equation}
	Combining (\ref{ine4}) with (\ref{ine5}), we obtain
	\[
	c^*_d(r_d) \leq \cfrac{2\langle\, r_d\, \rangle_a}{\lambda_0}=2\langle\, r_d\, \rangle_a \sqrt{\cfrac{\langle\, d\, \rangle_h}{\langle\, r\, \rangle_a}}=2\sqrt{\langle\, d\, \rangle_h\langle\, r_d\, \rangle_a},
	\]
	which completes the proof of $(1) \Leftrightarrow (2)$.
	
	It remains to prove that $(1) \Leftrightarrow (3)$.
	We first assume the statement $(1)$.
	As in the proof of Theorem \ref{main3}, we obtain $(3)$ and $\lambda
	_0$ only attains the minimum.
	See (\ref{minmax}).
	We next assume the statement $(3)$.
	Since the constant function $\varphi_0$ is the minimizer for $\varphi \mapsto I(\varphi;\lambda_0,d,r)$, we can substituting the constant function into the Euler-Lagrange equation.
	As in the proof of Theorem \ref{main3}, we obtain
	\[
	\cfrac{r}{\langle\, r\, \rangle_a} + \cfrac{\langle\, d\, \rangle_h}{d} = 2,
	\]
	which implies that $(1)$ establishes.
	Finally, we prove that $\varphi_0$ only attains 
	\[
	k_{\lambda_0}(d,r)=\min_{\varphi \in E_L} I(\varphi;\lambda_0,d,r).
	\]
	Assume that $(1)$ holds.
	Since $(2)$ and $(3)$ hold, we have $r=r_d$ and $k_{\lambda_0}(d,r) = I(1/\sqrt{L};\lambda_0,d,r)=-2\langle\, r\, \rangle_a$.
	Take any $\varphi \in E_L$ minimizing $\varphi \mapsto I(\varphi;\lambda_0,d,r)$.
	As in the proof of Theorem \ref{main3}, we obtain
	\begin{eqnarray*}
	-2\langle\, r\, \rangle_a &=& k_{\lambda_0}(d,r)\\
	&=& \int_{0}^{L}d|\varphi '|^2 \ dx -\int_{0}^{L} r \varphi^2 \ dx - \cfrac{\lambda_0^2L^2}{\displaystyle{\int_{0}^{L} \cfrac{1}{d\varphi^2}\  dx}}\\
	&\geq& -\int_{0}^{L} r \varphi^2 \ dx - \cfrac{\lambda_0^2L^2}{\displaystyle{\int_{0}^{L} \cfrac{1}{d\varphi^2}\  dx}}\\
	&\geq& -2\langle\, r\, \rangle_a.
	\end{eqnarray*}
	It implies that
	\[
	\int_{0}^{L}d|\varphi '|^2 \ dx =0.
	\]
	This clearly forces $\varphi \equiv 1/\sqrt{L}$, which is our claim.
\end{proof}

\begin{proof}[Proof of Theorem \ref{main4}]
	Set $d \in C_{\mathrm{per}}(\mathbb{R})$ with $\inf d >0$ and $k>0$.
	It suffices to show that $r_{kd}(x) = r_d(x)$ for any $x \in \mathbb{R}$.
	Since $\langle\, kd\, \rangle_h =k \langle\, d\, \rangle_h$, we obtain
	\begin{eqnarray*}
	r_{kd}(x) &=& \alpha \Big(2-\frac{\langle\, kd\, \rangle_h}{kd(x)}\Big)\\
	 &=& \alpha \Big(2-\frac{k\langle\, d\, \rangle_h}{kd(x)}\Big)\\
	 &=& \alpha \Big(2-\frac{\langle\, d\, \rangle_h}{d(x)}\Big)\\
	 &=& r_d(x) 
	\end{eqnarray*}
	for any $x \in \mathbb{R}$.
	This completes the proof.
\end{proof}

\begin{proof}[Proof of Theorem \ref{main4}]
	We first prove that $(2) \Rightarrow (1)$.
	In the case where $d(x)$ is a constant, by Theorem \ref{main3}, we see that $r(x)$ is a constant.
	It is known that $\psi_c$ is the principal eigenfunction if $d(x)$ and $r(x)$ are constants.
	
	We next prove that $(1) \Rightarrow (2)$.
	We assume that the constant function $\psi_c$ is the principal eigenvalue of the operator $-\mathcal{L}_{\lambda_0,d,r}$.
	By the assumption that $c^*_d(r) = 2\sqrt{\langle\, d\, \rangle_h \langle\, r\, \rangle_a}$, we have $k_{\lambda_0}(d,r)=-2\langle\, r\, \rangle_a$.
	The constant function $\psi_c$ satisfies
	\[
	- (d(x) \psi_c'(x))' - 2\lambda_0 d(x) \psi_c'(x) -(\lambda_0^2 d(x) + \lambda_0 d'(x) +r(x)) \psi_c(x)= k_{\lambda_0}(d,r)\psi_c(x).
	\]
	Thus we have
	\[
	\lambda_0^2 d(x) + \lambda_0 d'(x) +r(x)= 2\langle\, r\, \rangle_a.
	\]
	Dividing this equation by $L$ and integrating it from $0$ to $L$, we get
	\[
	\lambda_0^2 \langle\, d\, \rangle_a +\langle\, r\, \rangle_a= 2\langle\, r\, \rangle_a.
	\]
	Substituting $\lambda_0=\sqrt{\langle\, r\, \rangle_a/\langle\, d\, \rangle_h}$ into this equation, we obtain
	\begin{equation}\label{eha}
	\langle\, d\, \rangle_a = \langle\, d\, \rangle_h.
	\end{equation}
	In general, by the Cauchy-Schwarz inequality, we have
	\begin{equation}\label{ha}
	\Big( \int_{0}^{L} d(x) \,dx\Big) \Big( \int_{0}^{L}\cfrac{1}{d(x)}  \,dx\Big) \geq \Big( \int_{0}^{L} 1 \,dx\Big)^2=L^2,
	\end{equation}
	and this equality holds if and only if $d(x)$ is a constant function.
	From (\ref{ha}), we see that
	\[
	\langle\, d\, \rangle_a \geq \langle\, d\, \rangle_h.
	\]
	The equation (\ref{eha}) means that the equality in (\ref{ha}) holds, which gives $d(x)$ is a constant function.
\end{proof}

\noindent 
\textbf{Acknowledgement.} 
The author would like to thank Professor Hiroshi Matano and Professor Hirokazu Ninomiya for their suggestions and continued encouragement.


\small

\end{document}